\theoremstyle{plain}		
				\newtheorem{lemma}{Lemma}
				\newtheorem{thmA}{Theorem}
\theoremstyle{definition}	
\theoremstyle{remark}		
\newcommand*{\bR}{\ensuremath{\mathbb{R}}}
\newcommand*{\bN}{\ensuremath{\mathbb{N}}}
\newcommand*{\closure}[1]{\overline{#1}}
\newcommand*{\bdary}[1]{\partial #1}
\newcommand*{\Wert}{\mathord{\mbox{|\kern-1.5pt|\kern-1.5pt|}}}
\newcommand*{\ie}{\mbox{i.e.}\xspace}
\DeclareMathOperator{\dist}{dist}
\DeclareMathOperator{\diam}{diam}
\DeclareMathOperator{\modulus}{mod}
\title[mappings of finite distortion of polynomial type]{mappings of finite distortion of polynomial type}
\author{Changyu Guo}
\address[Changyu Guo]{Department of Mathematics and Statistics, University of Jyv\"askyl\"a, P.O. Box 35, FI-40014 University of Jyv\"askyl\"a, Finland}
\email{changyu.c.guo@jyu.fi}
\subjclass[2000]{30C65}
\keywords{ mapping of finite distortion, modulus of path family}
\thanks{C.Y.Guo was partially supported by the Academy of Finland grant 120972.}
\begin{document}
\begin{abstract}
Suppose that $f: \bR^n\to\bR^n$ is a mapping of $K$-bounded $p$-mean distortion for some $p>n-1$. We prove the equivalence of the following properties of $f$: doubling condition for  $J(x,f)$ over big balls centered at origin, boundedness of multiplicity function $N(f,\bR^n)$, polynomial type of $f$ and polynomial growth condition for $f$.
\end{abstract}

\maketitle

\section{Introduction}\label{sec:first}

Throughout this paper, we call a non-constant continuous mapping $f: \Omega\to\bR^n$ a mapping of finite distortion if $f$ belongs to the Sobolev space $W_{loc}^{1,1}(\Omega,\bR^n)$ with $\frac{|Df|^n}{\log(e+|Df|)}\in L_{loc}^1(\bR^n)$
and there exists a measurable function $K:\Omega\to [1,\infty)$ so that
\begin{equation}\label{eq:definition of MFD}
|Df(x)|^n\leq K(x)J(x,f),
\end{equation}
for almost every $x\in \Omega$. Here $|Df(x)|$ and $J(x,f)$ are the operator norm and the Jacobian determinant of $Df(x)$, respectively.
When $1\leq K(x,f)\leq K<\infty$ a.e.,  $f$ is called a  $K$-quasiregular mapping.

For a mapping of finite distortion $f$, the outer distortion function $K(\cdot,f)$ and the inner distortion function $K_I(\cdot,f)$ are defined as
\begin{equation*}
K(x,f)=\frac{|Df(x)|^n}{J(x,f)} \ \ and\ \ K_I(x,f)=\frac{|D^{\#}f(x)|^n}{J(x,f)^{n-1}},
\end{equation*}
respectively, when $0<|Df(x)|, J(x,f)<\infty$, and $K(x,f)=K_I(x,f)=1$ otherwise. Here $|D^{\#} f(x)|$ is the adjoint matrix of $Df(x)$. Clearly
\begin{equation*}
K_I(x,f)^{1/(n-1)}(x,f)\leq K(x,f)\leq K_I(x,f)^{n-1},
\end{equation*}
for almost every $x\in \Omega$. 

We say that a mapping of finite distortion $f: \bR^n\to\bR^n$ has $K$-bounded $p$-mean distortion, $p\geq 1$, if there exists a constant
$K\geq 1$ such that
\begin{equation}\label{equation:bounded mean distortion 1}
\fint_{B(0,r)}K(x,f)^pdx\leq K, \ \ \forall 1\leq r<\infty.
\end{equation}

Let $f:\Omega\to\bR^n$ be a mapping of finite distortion with $K(x,f)\in L^p_{loc}(\Omega)$ for some $p>n-1$ and $E\subset\Omega$ a Borel set.
We define the counting function
\begin{equation*}
n(E,y)=\sum_{x\in f^{-1}(y)\cap E}i(x,f),
\end{equation*}
where $i(x,f)$ is the local index.

We let $A(a,r)$ be the average of $n(B(a,r),y)$ over $\bar{R}^n$ with respect to the $n$-dimensional spherical measure, \ie
\begin{equation*}
A(a,r)=\frac{2^n}{\omega_n}\int_{\bR^n}\frac{n(B(a,r),y)}{(1+|y|^2)^n}dy=\frac{2^n}{\omega_n}\int_{\closure{B}(a,r)}\frac{J_f(x)}{(1+|f(x)|^2)^n}dx.
\end{equation*}
If $a$ happens to be the origin, we simply denote it by $A(r)=A(0,r)$.

For a mapping of finite distortion $f: \bR^n\to\bR^n$, we define the lower order of $f$ to be
\begin{equation}\label{eq:order}
    \lambda_f=\liminf_{r\to\infty}\frac{\log A(r)}{\log r}.
\end{equation}
We say that $f$ has finite lower order if $\lambda_f<\infty$.

The main result of this paper is the following theorem that generalizes the corresponding statements for quasiregular mappings from~\cite{hk95}.
\begin{thmA}\label{thm:thma}
  Let $f: \bR^n\to\bR^n$ be a mapping of $K$-bounded $p$-mean distortion for some $p>n-1$. Then the following statements are equivalent
\begin{description}
\item[1] $J(x,f)$ is doubling for balls centered at the origin with radius bigger than or equal to 1.
\item[2] $f$ is of polynomial type, \ie $\lim_{x\to\infty}|f(x)|=\infty$ and so $f$ can be extended continuously to $\overline{\bR^n}$.
\item[3] $N(f,\bR^n)$ is bounded.
\item[4] $f$ has polynomial growth, \ie there exist positive constants $C_1$ and $C_2$ and integer $m$ such that $|f(x)|\leq C_1|x|^m+C_2$.
\end{description}
Moreover, each of the above condition implies that $A(r)$ is doubling for $r\geq r_0$, where $r_0>0$ is a constant. In particular, 
$f$ has finite lower order. 
\end{thmA}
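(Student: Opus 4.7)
The plan is to close the cycle of implications (1) $\Rightarrow$ (2) $\Rightarrow$ (3) $\Rightarrow$ (4) $\Rightarrow$ (1) and then read off the statement about $A(r)$ and $\lambda_f$. The three main tools are: (i) the openness and discreteness of $f$, which are available for $K(\cdot,f) \in L^p_{\loc}$ with $p > n-1$; (ii) the Poletsky-type modulus inequality in this regularity range, which controls the modulus of image path families by a modulus of the original family weighted by a power of $K$; and (iii) the area formula $\int_E J(x,f)\,dx = \int n(E,y)\,dy$, valid because $f$ satisfies Lusin's $(N)$-condition under the present assumptions.

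For (1) $\Rightarrow$ (2), iterating the doubling of $J$ yields $\int_{B(0,r)} J(x,f)\,dx \lesssim r^s$ for some $s$; if $|f|$ failed to escape to infinity, then a sequence $x_k \to \infty$ with $|f(x_k)|$ bounded would let me apply the modulus inequality to a path family in a thick spherical shell centered at the origin and passing through $x_k$, yielding a contradiction with the polynomial growth of $\int J$. For (2) $\Rightarrow$ (3), the continuous extension of $f$ to $\overline{\bR^n}$ with $f(\infty)=\infty$ is a proper, open, discrete self-map of the sphere, whose topological degree is finite and bounds $N(f,\bR^n)$. For (3) $\Rightarrow$ (4), bounded multiplicity together with openness and discreteness forces properness; the modulus inequality then compares $\max_{|x|=r}|f(x)|$ with $\min_{|x|=r}|f(x)|$ up to a constant depending on an integral of $K^p$, and the $K$-bounded $p$-mean distortion hypothesis upgrades this to a polynomial upper bound for $|f|$. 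For (4) $\Rightarrow$ (1), polynomial growth together with bounded multiplicity (inherited from earlier steps) yields $\int_{B(0,2r)} J(x,f)\,dx \lesssim r^{mn}$ via the area formula, and the matching lower bound on $\int_{B(0,r)} J(x,f)\,dx$ comes from applying the modulus inequality to the ring condenser with boundary components $\overline{B}(0,r/2)$ and $\bR^n\setminus B(0,r)$, in combination with the $p$-mean distortion estimate.

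The concluding assertion follows easily from any of (1)--(4): a dyadic splitting of $\bR^n$ with respect to the level sets $|y| \sim 2^k$ in the spherical-average definition of $A(r)$, together with polynomial growth of $|f|$ and doubling of the $J$-integral, yields doubling of $A(r)$ for $r \geq r_0$, and in particular $\lambda_f<\infty$.

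I expect the hardest step to be (1) $\Rightarrow$ (2). Here I must upgrade purely quantitative $L^1$ information about $J$ into the qualitative topological assertion that $f$ is proper; the modulus inequality is essential, and the hypothesis $p > n-1$ enters precisely because it is the threshold at which the Poletsky-type inequality is available in the mapping-of-finite-distortion setting. A careful choice of admissible test function for the relevant condenser modulus, matched to the polynomial rate of growth of $\int J$ and to the integral bound $\fint_{B(0,r)}K^p \leq K$, will be the main technical effort.
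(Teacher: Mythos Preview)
Your cycle $1\Rightarrow 2\Rightarrow 3\Rightarrow 4\Rightarrow 1$ has a genuine logical gap at the last step. In proving $4\Rightarrow 1$ you write that you will use ``bounded multiplicity (inherited from earlier steps)'', but in a cycle argument the only hypothesis available at that stage is (4); you are not entitled to (3). Without a bound on $N(f,\bR^n)$ the area formula gives only $\int_{B(0,2r)}J=\int N(y,f,B(0,2r))\,dy$, which you cannot control by $|f(B(0,2r))|$, and your proposed lower bound for $\int_{B(0,r)}J$ via the ring condenser $(\overline{B}(0,r/2),\,\bR^n\setminus B(0,r))$ likewise produces nothing useful unless you already know $f$ is proper---which (4), an \emph{upper} bound on $|f|$, does not give.

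Your $1\Rightarrow 2$ is also problematic as sketched. The paper's argument for non-properness leading to a contradiction (its $3\Rightarrow 2$) needs $f^{-1}(f(0))$ to be finite so that all normal neighbourhoods of preimages of $f(0)$ sit in a fixed ball $B(0,t)$; this is exactly where (3) enters. It is not clear how ``polynomial growth of $\int J$'' alone produces a modulus contradiction with a bounded sequence $f(x_k)$.

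The paper organises the implications differently and thereby avoids both problems. It first proves a growth lemma: assuming (1), one has $L(0,R)=\max_{|x|\le R}|f(x)-f(0)|\le C'R^k$. This is exactly (4), so $1\Rightarrow 4$ is immediate. The substantive step is then $4\Rightarrow 3$ (equivalently $1\Rightarrow 3$): fix $y$ with many preimages in $B(0,R)$, pick an unbounded path $\gamma$ far from $y$, and compare the modulus of $\Gamma=\Gamma(\overline B(y,\delta),|\gamma|,\bR^n)$ with the $K_I$-weighted modulus of the family of maximal $f$-liftings via the V\"ais\"al\"a inequality; the growth bound forces the liftings to leave a ball of radius $\gtrsim|y-f(q)|^\alpha$, and sending $|y-f(q)|\to\infty$ bounds the number of preimages. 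The remaining links $3\Rightarrow 1$ and $3\Leftrightarrow 2$ are then the more routine modulus/degree arguments you describe. In short, the paper never attempts $1\Rightarrow 2$ or $4\Rightarrow 1$ directly; the hinge is $4\Rightarrow 3$, and your plan is missing precisely this step.
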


The lesson we draw from Theorem~\ref{thm:thma} is that $\infty$ must be a removable singularity if $f$ does not grows too fast. Under the condition that $f$ is of polynomial type, we actually conclude that $f$ must cover the whole $\bR^n$.

\section{Preliminaries and Auxiliary results}
\subsection{Area formula}
Let $f:\Omega\to\bR^n$ be a mapping of finite distortion with distortion function $K(x,f)\in L^p_{loc}(\Omega)$ for some $p>n-1$. Then $f$ is sense-preserving, discrete and open by the result in~\cite{kkm011},
the latter meaning that $f^{-1}(y)$ cannot have accumulation points in $\Omega$. In particular, $N(y,f,A)<\infty$
whenever $A\subset\subset \Omega$. Recall that $N(y,f,\Omega)$ is defined as the number of preimages of $y$ under $f$ in $\Omega$. We also put $N(f,\Omega):=\sup_{y\in \bR^n}N(y,f,\Omega)$. 

An open, connected neighborhood $U\subset\subset\Omega$ of a point $x\in \Omega$ is called a normal neighborhood of $x$ if $f(\bdary{U})=\bdary{f(U)}$ and $U\cap f^{-1}(f(x))=\{x\}$. Denote by $U(x,f,r)$ the $x$-component of $f^{-1}(B(f(x),r))$. For the following lemma, see Chapter I Lemma 4.9 in~\cite{r93}.

\begin{lemma}[Lemma 4.9, \cite{r93}]
  For each point $x\in \Omega$ there is $\sigma_x>0$ such that $U(x,f,r)$ is a normal neighborhood of $x$ whenever $0<r\leq \sigma_x$. Moreover, $\diam U(x,f,r)\to 0$ as $r\to 0$.
\end{lemma}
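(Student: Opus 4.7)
The plan is to exploit three properties established in the paragraph preceding the lemma, namely that $f$ is continuous, open, and discrete (so that $f^{-1}(f(x))$ has no accumulation points in $\Omega$), together with a connectedness argument to control the component $U(x,f,r)$.

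\medskip

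First I would fix $x\in\Omega$ and, using discreteness together with $\Omega$ being open, choose a radius $\rho>0$ so small that $\closure{B(x,\rho)}\subset\Omega$ and $\closure{B(x,\rho)}\cap f^{-1}(f(x))=\{x\}$. By continuity, $f(\bdary{B(x,\rho)})$ is a compact set that does not contain $f(x)$, so
\[
  \sigma_x:=\dist\bigl(f(x),f(\bdary{B(x,\rho)})\bigr)>0.
\]
Now fix $0<r\leq\sigma_x$ and set $V=f^{-1}(B(f(x),r))$ (open by continuity), with $U=U(x,f,r)$ the component of $V$ containing $x$.

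\medskip

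The first key step is to show $U\subset B(x,\rho)$. If not, then by connectedness of $U$ and $x\in U\cap B(x,\rho)$, there is a point $z\in U\cap\bdary{B(x,\rho)}$. Then $f(z)\in B(f(x),r)$, so $|f(z)-f(x)|<r\leq\sigma_x$, contradicting the definition of $\sigma_x$. Hence $\closure{U}\subset\closure{B(x,\rho)}\subset\Omega$, which gives $U\subset\subset\Omega$, and together with the choice of $\rho$ also yields $U\cap f^{-1}(f(x))=\{x\}$.

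\medskip

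The main step, and the only place where real care is needed, is to verify $f(\bdary{U})=\bdary{f(U)}$. Since $U$ is the connected \emph{component} of $V$ containing $x$ and $V$ is open, no point of $\bdary{U}$ can lie in $V$; otherwise a small neighborhood of such a point would extend $U$ to a strictly larger connected subset of $V$. Thus $f(\bdary{U})\cap B(f(x),r)=\emptyset$. On the other hand, $\bdary{U}\subset\closure{U}$ forces $f(\bdary{U})\subset f(\closure{U})\subset\closure{B(f(x),r)}$, so $f(\bdary{U})\subset\bdary{B(f(x),r)}$. Since $f(U)\subset B(f(x),r)$ and every point $y\in\bdary{U}$ is a limit of a sequence from $U$, we get $f(\bdary{U})\subset\closure{f(U)}$ and $f(\bdary{U})\cap f(U)=\emptyset$, hence $f(\bdary{U})\subset\bdary{f(U)}$. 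The reverse inclusion follows from the general identity: because $f$ is open, $f(U)$ is open, so $\bdary{f(U)}=\closure{f(U)}\setminus f(U)\subset f(\closure{U})\setminus f(U)\subset f(\bdary{U})$, using $\closure{f(U)}\subset f(\closure{U})$ (continuity and compactness of $\closure{U}$) and $f(\closure{U})=f(U)\cup f(\bdary{U})$. This establishes that $U$ is a normal neighborhood.

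\medskip

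Finally, for $\diam U(x,f,r)\to 0$, I would observe that the construction above works for every sufficiently small $\rho>0$: given $\varepsilon>0$, replace $\rho$ by $\min(\rho,\varepsilon/2)$, obtain a (possibly smaller) $\sigma_x(\varepsilon)>0$, and conclude $\diam U(x,f,r)\leq 2\rho\leq\varepsilon$ for all $0<r\leq\sigma_x(\varepsilon)$. The main potential obstacle is the boundary identity $f(\bdary{U})=\bdary{f(U)}$, but as sketched above it reduces cleanly to the openness of $f$ combined with the fact that $U$ is a full component of $V$; nothing deeper about mappings of finite distortion is needed beyond continuity, openness, and discreteness.
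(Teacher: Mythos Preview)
The paper does not supply its own proof of this lemma; it is quoted verbatim from Rickman's monograph (Chapter~I, Lemma~4.9) and used as a black box. Your argument is correct and is essentially the standard one given there: isolate $x$ in $f^{-1}(f(x))$ inside a small closed ball, set $\sigma_x$ to be the distance from $f(x)$ to the image of the bounding sphere, and use connectedness plus local connectedness of $\bR^n$ to trap the component $U(x,f,r)$ inside that ball; the boundary identity then follows from openness of $f$ together with the fact that $\bdary U$ misses $f^{-1}(B(f(x),r))$. There is nothing to compare against in the present paper, but nothing is missing from your sketch either.
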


Next suppose that $x\in\Omega$ and $0<r\leq \sigma_x$, where $\sigma_x$ is as in the above lemma. The local index $i(x,f)$ of $f$ at $x$ is defined as
\begin{equation*}
i(x,f)=N(f,U(x,f,r)).
\end{equation*}
Note in particular that $N(f,U(x,f,r))$ is independent of $r\leq \sigma_x$. Thus, $i(x,f)=1$ if and only if $x\in \Omega\backslash B_f$, where $B_f$ is the branch set of $f$. We refer the reader to~\cite{r93} Chapter I for this discussion.

Suppose that $f:\Omega\to\bR^n$ is a continuous, sense-preserving, discrete and open mapping. Let $\beta:[a,b)\to\bR^n$ be a path and let $x\in f^{-1}(\beta(a))$. A path $\alpha:[a,c)\to\Omega$ is called a maximal $f$-lifting of $\beta$ starting at $x$ if $\alpha(a)=x$, $f\circ\alpha=\beta|_{[a,c)}$ and if whenever $c<c'\leq b$, there does not exist a path $\alpha':[a,c)\to\Omega$ such that $\alpha=\alpha'|_{[a,c)}$ and $f\circ\alpha'=\beta|_{[a,c')}$.

Now let $x_1,\dots,x_k$ be $k$ different points of $f^{-1}(\beta(a))$ such that
\begin{equation*}
m=\sum_{j=1}^ki(x_j,f).
\end{equation*}
We say that the sequence $\alpha_1,\dots,\alpha_m$ of paths is a maximal sequence of $f$-liftings of $\beta$ starting at the points $x_1,\dots,x_k$ if each $\alpha_j$ is a maximal $f$-lifting of $\beta$ such that 
\begin{itemize}
\item $card\{j:\alpha_j(a)=x_i\}=i(x_i,f)$, $1\leq i\leq k$,
\item $card\{j:\alpha_j(t)=x\}\leq i(x,f)$ for all $x\in\Omega$ and all $t$.
\end{itemize}
The existence of maximal sequences of $f$-liftings is proved in Chapter II Theorem 3.2 in~\cite{r93}.

We close this subsection by the following well-known area formula.

\begin{lemma}\label{lemma:Area Formula}
  Let $f:\Omega\to\bR^n$ be a mapping of finite distortion with distortion function $K(x,f)\in L^p_{loc}(\Omega)$ for some $p>n-1$ and $\eta$ be a non-negative Borel-measurable function on $\bR^n$. Then
\begin{equation}\label{eq:Area Formula}
\int_{\Omega}\eta(f(x))J(x,f)dx=\int_{\bR^n}\eta(y)N(y,f,\Omega)dy.
\end{equation}
\end{lemma}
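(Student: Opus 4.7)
The plan is to reduce the identity to the classical area formula for a.e.\ differentiable mappings satisfying Lusin's condition (N). The two prerequisites to verify are therefore that (i) $f$ is differentiable almost everywhere on $\Omega$, and (ii) $f$ satisfies (N), \ie every measurable $E\subset\Omega$ with $|E|=0$ has $|f(E)|=0$.

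For (i), the distortion inequality $|Df(x)|^n\leq K(x,f)J(x,f)$ together with H\"older's inequality and the assumption $K(\cdot,f)\in L^p_{\loc}$, $p>n-1$, places $|Df|$ in $L^q_{\loc}$ for some $q$ strictly above $n-1$; the self-improving higher-integrability machinery underlying the sense-preserving/open/discrete results cited from \cite{kkm011} then upgrades this to $f\in W^{1,n}_{\loc}(\Omega,\bR^n)$, and hence to almost-everywhere differentiability. Property (N) belongs to the same regularity package for mappings of finite distortion with $p>n-1$-integrable distortion; this is the chief obstacle and the point at which the hypothesis $p>n-1$ is truly indispensable, since (N) is known to fail for mappings of finite distortion with less integrable distortion.

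Once (i) and (ii) are in hand, the classical argument proceeds as follows. For a Borel set $A\subset\bR^n$, apply the standard Federer-type decomposition: up to a measurable null set $E_0$ one writes $\Omega=E_0\cup\bigcup_j E_j$, where the $E_j$ are Borel sets on each of which $f$ restricted to $E_j$ is injective and the classical Lipschitz change-of-variables formula applies to the linearisation $Df$. The exceptional set $E_0$ contributes nothing to the left-hand side because $J(x,f)=0$ a.e.\ on the non-differentiability set, and nothing to the right-hand side because (N) forces $|f(E_0)|=0$. Summing the pieces,
\begin{equation*}
\int_{f^{-1}(A)}J(x,f)\,dx=\sum_j|f(E_j)\cap A|=\int_A N(y,f,\Omega)\,dy,
\end{equation*}
since for almost every $y\in A$ the number of indices $j$ with $y\in f(E_j)$ equals $N(y,f,\Omega)$. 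This establishes \eqref{eq:Area Formula} for $\eta=\chi_A$. Linearity then extends the identity to non-negative simple Borel functions, and the monotone convergence theorem delivers the general non-negative Borel $\eta$.
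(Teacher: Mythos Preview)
Your approach is correct and matches the paper's, which does not give a detailed argument but simply remarks that the area formula holds because $f$ satisfies Lusin's condition (N), citing \cite{kkm01} Theorem~A for that fact (note: (N) is in \cite{kkm01}, while \cite{kkm011} is the discreteness/openness paper). One small imprecision worth tightening: membership in $W^{1,n}_{\loc}$ by itself does not force a.e.\ differentiability; you should instead invoke the continuity and openness/monotonicity of $f$ together with Stepanov's theorem, or cite a.e.\ differentiability directly from the finite-distortion regularity theory.
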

The validity of the above lemma lies in the fact that $f$ satisfies the so-called Lusin condition $N$. This is proven in~\cite{kkm01} Theorem A. Recall that a mapping $f:\Omega\to\bR^n$ is said to satisfy the Lusin condition $N$ if the implication $|E|=0\Rightarrow |f(E)|=0$ holds for all measurable sets $E\subset \Omega$. 

\subsection{Basic modulus estimates}
We first give the definitions of the $p$-modulus and weighted $p$-modulus of a path family. Let~$E$ and~$F$ be subsets of $\closure{\Omega}$. We denote by $\Gamma(E,F,\Omega)$ the path family consisting of all locally rectifiable paths joining~$E$ to~$F$ in~$\Omega$. A Borel function $\rho\colon\bR^n\to[0,\infty\mathclose]$ is said to be admissible for $\Gamma(E,F,\Omega)$ if $\int_\gamma\rho\,ds\geq1$ for all $\gamma\in\Gamma(E,F,\Omega)$. The $p$-modulus of a path family $\Gamma:=\Gamma(E,F,\Omega)$ is defined as
\begin{multline*}
  \modulus_p(\Gamma):=\inf\Big\{\int_{\Omega}\rho^p(x)\,dx :  \rho\colon\bR^n\to[0,\infty\mathclose] \text{ is an admissible }\\
  \text{Borel function for } \Gamma \Big\}.
\end{multline*}
Let $\omega:\Omega\to[0,\infty\mathclose]$ be a measurable function. The weighted $p$-modulus of the path family $\Gamma$ is then defined as
 \begin{multline*}
  \modulus_{p,\omega}(\Gamma):=\inf\Big\{\int_{\Omega}\rho^p(x)\omega(x)\,dx :  \rho\colon\bR^n\to[0,\infty\mathclose] \text{ is an admissible }\\
  \text{Borel function for } \Gamma \Big\}.
\end{multline*}
Note that when $\omega(x)=1$ for all $x\in\Omega$, we recover the usual $p$-modulus $\modulus_p$. When $p=n$, we write $\modulus_{\omega}$ instead of $\modulus_{n,\omega}$.

Next we introduce the $p$-modulus on spheres. Let $\Gamma$ be a path family in $S(a,r)$, and let $1<p<\infty$. Set
\begin{multline*}
  \modulus_p^S(\Gamma):=\inf\Big\{\int_{S(a,r)}\rho^p(x)\,dS(x) :  \rho\colon S(a,r)\to[0,\infty\mathclose] \text{ is an admissible }\\
  \text{Borel function for } \Gamma \Big\}.
\end{multline*}

The notation $\Gamma_f$ is used to denote the collection of all locally rectifiable paths in $A$ having a closed subpath on which
$f$ is not absolutely continuous.  The following $K_o$-inequality for mapping of finite distortion is due to Rajala~\cite{r04}.

\begin{lemma}[\cite{r04}, Theorem 2.1]\label{lemma:K_0 inequality}
  Let $f:\Omega\to\bR^n$ be a mapping of finite distortion with $K(x,f)\in L^{p}_{loc}(\Omega)$ for some $p>n-1$. Let $A\subset \Omega$ be
  a Borel set with
\begin{equation*}
\sup_{y\in\bR^n}N(y,f,A)<\infty,
\end{equation*}
and $\Gamma$ a family of paths in $A$. If a function $\rho$ is admissible for $f(\Gamma\backslash \Gamma_f)$, then
\begin{equation}
\modulus_{K^{-1}(\cdot,f)}(\Gamma\backslash \Gamma_f)\leq \int_{\bR^n}\rho^n(y)N(y,f,A)dy,
\end{equation}
Moreover, $\modulus_p(\Gamma_f)=0$ for all $1<p<n$.
\end{lemma}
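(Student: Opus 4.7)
The plan is to handle the auxiliary statement $\modulus_p(\Gamma_f)=0$ first, since it is the technical heart of the argument, and then to prove the main inequality by pulling an admissible function back through $f$ and invoking the area formula.

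For the vanishing $\modulus_p(\Gamma_f)=0$ when $1<p<n$: the pointwise identity $|Df|^n=K(\cdot,f)\,J(\cdot,f)$ (valid a.e.\ for a mapping of finite distortion, from the very definition of $K(x,f)$) combined with H\"older's inequality gives, for each relatively compact $B\subset\Omega$ and each $s\in(n-1,n)$ with $s/(n-s)\leq p$,
\begin{equation*}
\int_B|Df|^s\,dx\leq\Big(\int_BK^{s/(n-s)}\,dx\Big)^{(n-s)/n}\Big(\int_BJ\,dx\Big)^{s/n}<\infty.
\end{equation*}
Because $p>n-1$, one can choose some $s>n-1$ satisfying $s/(n-s)\leq p$, and the sharp higher-integrability theorem for mappings of finite distortion with $K\in L^p_{\loc}$, $p>n-1$, in fact pushes $|Df|$ into every $L^q_{\loc}$ with $q<n$. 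The classical Fuglede argument (that $|Df|$ is an upper gradient of the ACL-representative of $f$, so $p$-a.e.\ path avoids the exceptional set where absolute continuity fails) then yields $\modulus_p(\Gamma_f)=0$ for every $p\in(1,n)$.

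For the main inequality, given $\rho$ admissible for $f(\Gamma\backslash\Gamma_f)$, define the pullback
\begin{equation*}
\tilde\rho(x):=\chi_A(x)\,\rho(f(x))\,|Df(x)|.
\end{equation*}
For each $\gamma\in\Gamma\backslash\Gamma_f$, $f$ is absolutely continuous on every closed subpath of $\gamma$, hence $f\circ\gamma$ is locally rectifiable and $|(f\circ\gamma)'|\leq|Df(\gamma)|\,|\gamma'|$ a.e.; combined with admissibility of $\rho$ this gives
\begin{equation*}
1\leq\int_{f\circ\gamma}\rho\,ds\leq\int_\gamma(\rho\circ f)\,|Df|\,ds=\int_\gamma\tilde\rho\,ds,
\end{equation*}
so $\tilde\rho$ is admissible for $\Gamma\backslash\Gamma_f$. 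Using the identity $|Df|^nK^{-1}(\cdot,f)=J(\cdot,f)$ a.e.\ together with Lemma~\ref{lemma:Area Formula} (which applies because $f$ satisfies Lusin's condition $N$),
\begin{equation*}
\modulus_{K^{-1}(\cdot,f)}(\Gamma\backslash\Gamma_f)\leq\int_A\tilde\rho^n\,K^{-1}\,dx=\int_A\rho(f(x))^n\,J(x,f)\,dx=\int_{\bR^n}\rho(y)^n\,N(y,f,A)\,dy,
\end{equation*}
which is the claimed bound.

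The hardest part will be upgrading the Fuglede step from $\modulus_s(\Gamma_f)=0$ with some fixed $s>n-1$ (which the direct H\"older computation already supplies) to the full range $\modulus_p(\Gamma_f)=0$ for all $p<n$: this requires the sharp higher-integrability machinery for mappings of finite distortion under $K\in L^p_{\loc}$, $p>n-1$. A secondary subtlety is the interpretation of admissibility of $\rho$ for $f(\Gamma\backslash\Gamma_f)$ used in Step 2 — the family $f(\Gamma\backslash\Gamma_f)$ consists of locally rectifiable paths $f\circ\gamma$ precisely because $\gamma\notin\Gamma_f$, which is exactly why removing the exceptional family before the pullback is essential, and why the hypothesis $\sup_y N(y,f,A)<\infty$ is needed to legitimately apply the area formula with $N(y,f,A)$ on the right-hand side.
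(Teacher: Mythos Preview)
The paper does not prove this lemma at all; it is simply quoted from Rajala~\cite{r04}, Theorem~2.1, with no accompanying argument, so there is nothing in the paper to compare your proposal against.

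Your proof of the main inequality is the standard pullback argument and is correct: $\tilde\rho=\chi_A\,(\rho\circ f)\,|Df|$ is admissible for $\Gamma\setminus\Gamma_f$ by the chain rule along paths on which $f$ is absolutely continuous, and then the pointwise identity $|Df|^nK^{-1}(\cdot,f)=J(\cdot,f)$ together with the area formula (Lemma~\ref{lemma:Area Formula}, applied with $A$ in place of $\Omega$) yields the bound. This is exactly the argument in~\cite{r04}.

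Your treatment of $\modulus_p(\Gamma_f)=0$ reaches the right conclusion but is overcomplicated. You single out as the ``hardest part'' the upgrade from $|Df|\in L^s_{\loc}$ for one $s>n-1$ to $|Df|\in L^q_{\loc}$ for every $q<n$, and you propose to invoke ``sharp higher-integrability machinery'' for this. No such machinery is needed: the paper's standing definition of a mapping of finite distortion already assumes
\[
\frac{|Df|^n}{\log(e+|Df|)}\in L^1_{\loc}(\Omega),
\]
and the elementary pointwise inequality $t^q\leq C_q\bigl(1+t^n/\log(e+t)\bigr)$, valid for every $0<q<n$, immediately gives $|Df|\in L^q_{\loc}$ for all $q<n$. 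Fuglede's lemma then yields $\modulus_q(\Gamma_f)=0$ for every $q\in(1,n)$ directly. So the step you flag as most delicate is in fact trivial once one reads the hypotheses; the H\"older bootstrap you wrote out and the appeal to deeper regularity theory are both unnecessary.
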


The following important V\"as\"al\"a inequality for mappings of finite distortion was proved by Koskela and Onninen~\cite{ko06}.
Here $i(x,f)$ is the local index of $f$ at a point $x$.

\begin{lemma}[\cite{ko06}, Theorem 4.1]\label{lemma:Vaisala inequality}
  Let $f:\Omega\to\bR^n$ be a mapping of finite distortion with $K(x,f)\in L^{p}_{loc}(\Omega)$ for some $p>n-1$. Let $\Gamma$
  be a path family in $\Omega$, $\Gamma'$ a path family in $\bR^n$, and $m$ a positive integer with the following property.
  For every path $\beta:I\to\bR^n$, there are paths $\alpha_1,\cdots,\alpha_m$ in $\Gamma$ such that $f\circ\alpha_j\subset\beta$
  for all $j$ and such that for every $x\in\Omega$ and $t\in I$, $\alpha_j(t)=x$ for at most $i(x,f)$ indices j. Then
\begin{equation}
\modulus(\Gamma')\leq \frac{\modulus_{K_I(\cdot,f)}(\Gamma)}{m}.
\end{equation}
\end{lemma}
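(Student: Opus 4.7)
The strategy is the Poletsky--V\"ais\"al\"a template adapted to the finite-distortion setting: given any Borel function $\rho$ admissible for $\Gamma\setminus\Gamma_f$, I would construct a Borel function $\rho'$ on $\bR^n$ that is admissible for $\Gamma'$ (up to a subfamily of zero modulus) and satisfies
\[
\int_{\bR^n}\rho'(y)^n\,dy \leq \frac{1}{m}\int_{\Omega}\rho(x)^n K_I(x,f)\,dx.
\]
Since Lemma~\ref{lemma:K_0 inequality} gives $\modulus_p(\Gamma_f)=0$ for every $1<p<n$, the paths with non-absolutely-continuous $f$-liftings can be discarded from both $\Gamma$ and $\Gamma'$ without affecting either modulus.

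For almost every $y\in\bR^n$ outside the image of $B_f\cup\{J_f=0\}$, the preimage $f^{-1}(y)\cap\Omega$ is discrete and consists of points at which $f$ is differentiable with $J_f>0$. The natural candidate is
\[
\rho'(y)=\frac{1}{m}\sum_{x\in f^{-1}(y)} i(x,f)\,\rho(x)\,\frac{|D^{\#}f(x)|}{J_f(x)}.
\]
To verify admissibility, fix $\beta\in\Gamma'$ with $m$ liftings $\alpha_1,\dots,\alpha_m\in\Gamma$; the chain rule yields $|\alpha_j'(t)|\leq(|D^{\#}f(\alpha_j(t))|/J_f(\alpha_j(t)))\,|\beta'(t)|$, and the constraint that at most $i(x,f)$ of the $\alpha_j$ meet $x$ at time $t$ permits grouping the sum over $j$ by the value of $\alpha_j(t)$ and gives
\[
\sum_{j=1}^m\rho(\alpha_j(t))\,|\alpha_j'(t)|\leq\sum_{x\in f^{-1}(\beta(t))} i(x,f)\,\rho(x)\,\frac{|D^{\#}f(x)|}{J_f(x)}\,|\beta'(t)|=m\,\rho'(\beta(t))\,|\beta'(t)|.
\]
Integrating and using $\int_{\alpha_j}\rho\,ds\geq 1$ yields $\int_{\beta}\rho'\,ds\geq 1$.

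For the energy estimate I would refine the construction by choosing, measurably in $y$, a list of exactly $m$ ``counted preimages'' $x_1(y),\dots,x_m(y)$ (each $x\in f^{-1}(y)$ appearing with multiplicity at most $i(x,f)$, matching the available $m$ liftings), and replacing $\rho'(y)$ by the uniform average over these $m$ slots. Jensen's inequality together with the identity $K_I(x,f)=(|D^{\#}f(x)|/J_f(x))^n\,J_f(x)$ then gives
\[
\rho'(y)^n\leq\frac{1}{m}\sum_{k=1}^m\rho(x_k(y))^n\,\frac{K_I(x_k(y),f)}{J_f(x_k(y))}.
\]
Integrating in $y$ and invoking the area formula of Lemma~\ref{lemma:Area Formula} collapses the right-hand side to $\frac{1}{m}\int_\Omega\rho^n K_I\,dx$; taking the infimum over admissible $\rho$ yields $\modulus(\Gamma')\leq\modulus_{K_I(\cdot,f)}(\Gamma)/m$.

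The chief obstacle, absent in the quasiregular setting, is that $f$ need not be everywhere differentiable with $J_f>0$, and the branch set, the non-differentiable set and $\{J_f=0\}$ must all be shown irrelevant for the modulus computation. The hypothesis $K(\cdot,f)\in L^p_{\loc}$ with $p>n-1$ is precisely what makes this possible: via Lemma~\ref{lemma:K_0 inequality} and the Lusin $(N)$ condition behind Lemma~\ref{lemma:Area Formula}, the pathological path families have zero modulus, and Rickman's maximal-lifting construction still applies to $f$ since it is sense-preserving, discrete and open. The most delicate bookkeeping is the measurable selection of the $m$ slots in the refined definition of $\rho'$, which must be done so that a single Borel function $\rho'$ serves simultaneously for every $\beta\in\Gamma'$.
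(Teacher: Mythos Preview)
The paper does not prove this lemma; it is stated as a direct citation of Theorem~4.1 in Koskela--Onninen~\cite{ko06} and no argument is given. There is therefore nothing in the paper to compare your attempt against.

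For what it is worth, your outline follows the classical Poletsky--V\"ais\"al\"a template and correctly identifies the two points where the finite-distortion case diverges from the quasiregular one: the need to discard exceptional path families (handled via the $p>n-1$ hypothesis and Lemma~\ref{lemma:K_0 inequality}) and the measurable-selection bookkeeping for the $m$ ``slots'' in the refined $\rho'$. The latter is genuinely the subtle part of the Koskela--Onninen argument, and your sketch flags it without resolving it; a full proof would need either the explicit construction from~\cite{ko06} or a careful appeal to a measurable-selection theorem. But as far as the present paper is concerned, the lemma is simply quoted.
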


For spherical rings, we prove the following upper bound for $K_I(\cdot,f)$-modulus.

\begin{lemma}\label{lemma:upper estimate for spherical rings}
 Let $f:\Omega\to\bR^n$ be a mapping of finite distortion with $K(x,f)\in L^{p}_{loc}(\Omega)$ for some $p>n-1$.   Suppose that for some
 $a\in\Omega$ and $R>1$, $B(a,R)\subset\subset\Omega$ and that
 \begin{equation*}
    I_a^s:=\fint_{B(a,s)}K(x,f)^pdx\leq K<\infty, \ \ \forall 1\leq s\leq R.
 \end{equation*}
There exists a constant $C>0$, depending only on $n$ and $K$, such that
\begin{equation}\label{equation:upper weighted estimate}
\modulus_{K_I(\cdot,f)}(\Gamma)\leq C\log^{1-n}(R/r).
\end{equation}
for all $1\leq 2r\leq R$, where $\Gamma$ is the family of all paths connecting $\closure{B}(a,r)$ and $\bR^n\backslash B(a,R)$.
\end{lemma}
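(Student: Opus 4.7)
The plan is to construct an explicit admissible density for $f(\Gamma)$-style radial estimates (but here directly for $\Gamma$), apply H\"older's inequality on dyadic annular shells, and invoke the bounded mean distortion hypothesis on each shell, summing the contributions.

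First, I would take the standard radial admissible function
\[
\rho(x) = \frac{1}{|x-a|\,\log(R/r)}\,\chi_{B(a,R)\setminus \closure{B}(a,r)}(x),
\]
which is admissible for $\Gamma$ since any path $\gamma\in\Gamma$ must traverse the annulus $B(a,R)\setminus \closure{B}(a,r)$, giving $\int_\gamma \rho\,ds\geq 1$. Thus
\[
\modulus_{K_I(\cdot,f)}(\Gamma)\;\leq\;\int_{B(a,R)\setminus \closure{B}(a,r)}\rho(x)^n K_I(x,f)\,dx.
\]

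Next I would decompose the annulus into dyadic shells $A_k = \{x:2^k r\leq |x-a|<2^{k+1}r\}$ for $k=0,1,\ldots,N-1$, with $N\sim \log_2(R/r)$ chosen so that $2^N r\leq R$. On each $A_k$, I have $|x-a|\geq 2^k r$, and I apply H\"older's inequality with conjugate exponents $s=p/(n-1)>1$ and $s'=p/(p-n+1)$:
\[
\int_{A_k} K_I(x,f)\,dx \;\leq\; \Bigl(\int_{A_k} K_I(x,f)^{s}\,dx\Bigr)^{1/s}|A_k|^{1/s'}.
\]
Using $K_I(x,f)\leq K(x,f)^{n-1}$ (from the inequality stated in the introduction), the integrand $K_I^s$ is dominated by $K^p$; the hypothesis $\fint_{B(a,2^{k+1}r)}K(x,f)^p\,dx\leq K$ (valid since $2r\geq 1$ implies $2^{k+1}r\geq 1$, and $2^{k+1}r\leq R$) then yields
\[
\int_{A_k}K_I(x,f)^s\,dx \;\leq\; K\,\omega_n\,(2^{k+1}r)^n.
\]
A direct computation of the exponents $\tfrac{n(n-1)}{p}+\tfrac{n(p-n+1)}{p}=n$ gives $\int_{A_k}K_I(x,f)\,dx\leq C(2^k r)^n$ with $C=C(n,K)$. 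Hence
\[
\int_{A_k}\rho^n K_I\,dx \;\leq\; \frac{1}{(2^k r)^n\log^n(R/r)}\cdot C(2^k r)^n \;=\; \frac{C}{\log^n(R/r)}.
\]

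Finally, summing over the $N\leq C\log(R/r)$ shells produces
\[
\int_{B(a,R)\setminus \closure{B}(a,r)}\rho^n K_I\,dx \;\leq\; \frac{C\log(R/r)}{\log^n(R/r)} \;=\; C\log^{1-n}(R/r),
\]
giving the claim. The only delicate point is choosing the H\"older exponents so that the exponents of $2^k r$ cancel exactly, making each shell's contribution independent of $k$ — it is precisely the hypothesis $p>n-1$ (ensuring $s>1$) together with the normalization $\fint K^p\leq K$ that makes this exponent bookkeeping work and produces the cancellation in powers of $2^{k+1}r$.
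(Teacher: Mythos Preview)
Your proof is correct and follows the same route as the paper: the identical radial test function, the same dyadic decomposition of the annulus, and the bounded $p$-mean distortion hypothesis applied on each shell. The paper compresses the step $\int_{A_k}K_I\,dx\leq C(n,K)(2^kr)^n$ into a single line (``we have used the condition $I_a^s\leq K$''), whereas you make the H\"older argument with exponents $s=p/(n-1)$, $s'=p/(p-n+1)$ and the pointwise bound $K_I\leq K^{n-1}$ fully explicit; both yield the same shell-by-shell constant and the same final estimate.
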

\begin{proof}
Let $\rho(x)=\frac{1}{|x-a|}\log^{-1}(R/r)$ if $x\in B(a,r)\backslash \bar{B}(a,r)$ and $\rho(x)=0$ otherwise. Then $\rho$ is an
admissible function for $\Gamma$. Let $k$ be the smallest positive integer such that $2^kr\geq R$. Then it follows that
\begin{multline*}
\modulus_{K_I(\cdot,f)}(\Gamma)\leq \int_{B(a,R)\backslash \bar{B(a,r)}}\rho^nK_I(x,f)dx\\
\leq \sum_{i=1}^k\int_{B(a,2^ir)\backslash \bar{B}(a,2^{i-1}r)}\frac{1}{|x-a|^n}\log^{-n}(R/r)K_I(x,f)dx\\
\leq C(n)\log^{-n}(R/r)\sum_{i=1}^kK=C(n)K\log^{1-n}(R/r),\\
\end{multline*}
where, in the last inequality, we have used the condition $I_a^s\leq K$ for all $1\leq s\leq R$.
\end{proof}

Next we will derive a lower bound estimate for the $K^{-1}(\cdot,f)$-modulus for spherical rings . The proof here is a small
modification to the proof of Theorem 2.2 in~\cite{r04}.

\begin{lemma}\label{lemma:lower modulus estimate}
Let $f:\Omega\to\bR^n$ be a mapping of finite distortion with $K(x,f)\in L^{p}_{loc}(\Omega)$ some $p>n-1$. Let $E$ and $F$ be two continua.
Assume that there exists a point $a$ such that $S(a,t)$ intersects both $E$ and $F$ for all $r'<t<R'$ and $B(a,R')\subset \Omega$.
Let $Y=\fint_{B(a,R')\backslash B(a,r')}K(x,f)^pdx$. Then we have the following lower bound:
\begin{equation}\label{equation:equation lower}
\modulus_{K^{-1}(\cdot,f)}(\Gamma\backslash \Gamma_f)\geq C(n)\frac{R'-r'}{R'}Y^{-\frac{1}{p}},
\end{equation}
where $\Gamma$ is the family of paths joining $E$ and $F$ in $B(a,R')\backslash \bar{B}(a,r')$.
\end{lemma}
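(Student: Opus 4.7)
The plan is to adapt Rajala's proof of Theorem~2.2 in~\cite{r04} to the present averaged hypothesis on $K$. Given any Borel function $\rho$ admissible for $\Gamma\setminus\Gamma_f$, the goal is to establish a lower bound on $\int \rho^n K^{-1}(x,f)\,dx$ of the claimed form, which by definition yields a lower bound on $\modulus_{K^{-1}(\cdot,f)}(\Gamma\setminus\Gamma_f)$.

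The first ingredient is a spherical-slice construction exploiting the hypothesis that $S(a,t)$ meets both $E$ and $F$ for every $t\in(r',R')$. Since $\modulus_p(\Gamma_f)=0$ for all $1<p<n$, for a.e.\ $t$ one may select a rectifiable path $\gamma_t\subset S(a,t)$ joining a point of $E\cap S(a,t)$ to a point of $F\cap S(a,t)$ with $\gamma_t\in\Gamma\setminus\Gamma_f$ and $L(\gamma_t)\le \pi R'$. Admissibility of $\rho$ combined with H\"older's inequality applied to the pointwise splitting $\rho=(\rho^n K^{-1})^{1/n} K^{1/n}$, with exponents $n$ and $n/(n-1)$, produces
\begin{equation*}
  1\le \left(\int_{\gamma_t}\rho^n K^{-1}\,ds\right)^{1/n}\left(\int_{\gamma_t} K^{1/(n-1)}\,ds\right)^{(n-1)/n},
\end{equation*}
and a second H\"older step, available because $p>n-1$, permits replacing $K^{1/(n-1)}$ by $K^p$ at the cost of a length factor. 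Integrating the resulting estimate over $t\in(r',R')$ via Fubini, using enough spherical directions to sweep out the full volume of the annulus, and then invoking the hypothesis $\fint_{B(a,s)}K^p\,dx\le Y$ yields the claimed bound: the radial integration produces the geometric factor $(R'-r')/R'$, while the $L^p$-averaged bound on $K$ produces $Y^{-1/p}$.

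The main technical obstacle is the passage from the one-dimensional line estimate along a single $\gamma_t$ to the $n$-dimensional volume estimate, since a single slice sweeps out only a two-dimensional surface. The argument therefore needs to be upgraded by considering an $(n-1)$-parameter family of spherical paths on each $S(a,t)$ and using the spherical modulus $\modulus^S_{n-1}$; this is precisely the small modification to Rajala's argument. Careful bookkeeping of the H\"older exponents is required so that the final estimate lands exactly on $Y^{-1/p}$ with the correct geometric dependence on $(R'-r')/R'$.
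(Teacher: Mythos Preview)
Your overall strategy---adapt Rajala's Theorem~2.2 in~\cite{r04}---is the right one, but the execution plan is off in a way that matters. Doing H\"older \emph{along a single path} $\gamma_t$ and then trying to integrate over a family of such paths is the wrong order of operations: after the pathwise H\"older you are left with a product of two line integrals, $(\int_{\gamma_t}\rho^nK^{-1}\,ds)^{1/n}(\int_{\gamma_t}K^{1/(n-1)}\,ds)^{(n-1)/n}$, and averaging such a product over an $(n-1)$-parameter family of paths does not produce a product of volume integrals without a further H\"older step that wrecks the exponents. You correctly diagnose the dimension mismatch, but the proposed cure (an $(n-1)$-parameter family together with spherical $\modulus^S_{n-1}$) keeps the same structural problem and, in addition, uses the wrong exponent on the spherical modulus.

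The paper's argument avoids pathwise H\"older altogether. Fix the single exponent $p'=n-\frac{n}{p+1}=\frac{np}{p+1}\in(n-1,n)$. Since $\modulus_{p'}(\Gamma_f)=0$, one has $\modulus_{p'}^S(\Gamma_f^t)=0$ for a.e.\ $t$, and Lemma~2.4 of~\cite{r04} (a spherical Loewner bound) gives directly the $(n-1)$-dimensional estimate
\[
\int_{S(a,t)}\rho^{p'}\,dS \;\ge\; \modulus_{p'}^S(\Gamma^t\setminus\Gamma_f^t)=\modulus_{p'}^S(\Gamma^t)\;\ge\; C_{p'}\,t^{\,n-1-p'}
\]
for a.e.\ $t\in(r',R')$. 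Integrating over $t$ and using $|E|\asymp (R'-r')R'^{\,n-1}$ (with $E=B(a,R')\setminus B(a,r')$) yields $1\le CR'\bigl(\fint_E\rho^{p'}\bigr)^{1/p'}$. Only \emph{now} apply H\"older, once, at the volume level: write $\rho^{p'}=(\rho^nK^{-1})^{p'/n}K^{p'/n}$ with exponents $n/p'$ and $n/(n-p')$. The value of $p'$ is engineered precisely so that $p'/(n-p')=p$; the second factor is therefore $\bigl(\fint_E K^p\bigr)^{(n-p')/n}$, and after taking the $1/p'$ power it becomes $Y^{1/(np)}$. Rearranging gives $\int_E\rho^nK^{-1}\ge C\,\frac{R'-r'}{R'}\,Y^{-1/p}$. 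So the ``careful bookkeeping'' you allude to is not a loose end to be resolved later: the single choice $p'=np/(p+1)$ makes everything close simultaneously, and your two-step H\"older through $K^{1/(n-1)}$ is not the mechanism that works here.
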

\begin{proof}
Let $\rho$ be an admissible function for $\modulus_{K^{-1}(\cdot,f)}(\Gamma\backslash \Gamma_f)$ so that $\rho^nK^{-1}(\cdot,f)$ is integrable.
Fix $p'=n-\frac{n}{p+1}\in (n-1,n)$. Then
\begin{equation*}
\int_{S(a,t)}\rho^{p'}(x)dS(x)<\infty
\end{equation*}
for almost every $t\in (r',R')$. Now, by Lemma~\ref{lemma:K_0 inequality}, $\modulus_{p'}(\Gamma_f)=0$ and thus $\modulus_{p'}^S(\Gamma_f^t)=0$
for almost all $t\in (r',R')$, where
\begin{equation*}
\Gamma_f^t=\{\gamma\in\Gamma_f,|\gamma|\subset S(a,t)\}.
\end{equation*}
Lemma 2.4 in~\cite{r04} gives for almost all $t\in (r',R')$,
\begin{equation*}
\int_{S(a,t)}\rho^{p'}(x)dS(x)\geq \modulus_{p'}^S(\Gamma^t\backslash \Gamma_f^t)=\modulus_{p'}^S(\Gamma^t)\geq\frac{C_{p'}}{t^{p'+1-n}},
\end{equation*}
where $\Gamma^t=\{\gamma\in \Gamma:|\gamma|\subset S(a,t)\}$. We conclude that 
\begin{equation*}
1\leq Ct^{p'+1-n}\int_{S(a,t)}\rho^{p'}(x)dS(x)
\end{equation*}
for almost every $t\in (r',R')$. Integrating over $t$, we get
\begin{equation*}
(R'-r')essinf_{t\in (r',R')}\int_{S(a,t)}\rho^{p'}(x)dS(x)\leq \int_{B(a,R')\backslash B(a,r')}\rho^{p'}(x)dx.
\end{equation*}
Write $E=B(a,R')\backslash B(a,r')$. Then $|E|$ is bounded from above and below by dimensional constants times $(R'-r')R'^{n-1}$.
Combining the above two inequalities gives
\begin{equation}\label{equation:equation 1}
1\leq CR'\big(|E|^{-1}\int_E\rho^{p'}(x)dx\big)^{1/p'}.
\end{equation}
Writing $\rho^{p'}(x)=\rho^{p'}(x)K^{-p'/n}(x,f)K^{p'/n}(x,f)$ and using Holder's inequality, we see the righthand side
of~\eqref{equation:equation 1} is at most
\begin{equation}\label{equation:equation 2}
CR'\big(|E|^{-1}\int_E \rho^n(x)K^{-1}(x,f)dx\big)^{1/n}\big(|E|^{-1}\int_E K^p(x,f)dx\big)^{1/np}.
\end{equation}
Combining estimate~\eqref{equation:equation 1} with~\eqref{equation:equation 2}, we obtain the desired estimate
\begin{equation*}
    C(n)\frac{R'-r'}{R'}Y^{-\frac{1}{p}}\leq \int_E \rho^n(x)K^{-1}(x,f)dx.
\end{equation*}
\end{proof}

\begin{lemma}\label{lemma:upper modulus estimate}
Let $f: \bR^n\to\bR^n$ be a mapping of $K$-bounded $p$-mean distortion for some $p>n-1$. Then $f(\bR^n)$ cannot omit a set of positive $n$-capacity.
\end{lemma}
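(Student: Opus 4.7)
The plan is to argue by contradiction using the V\"ais\"al\"a inequality (Lemma~\ref{lemma:Vaisala inequality}) combined with the upper modulus estimate on spherical rings (Lemma~\ref{lemma:upper estimate for spherical rings}). Suppose on the contrary that $\bR^n\setminus f(\bR^n)$ has positive $n$-capacity; standard capacity theory then supplies a non-degenerate continuum $F_0\subset \bR^n\setminus f(\bR^n)$ of positive $n$-capacity. Since $f$ is sense-preserving, open, and discrete (by~\cite{kkm011}), $f(\bR^n)$ is a non-empty open set. Fix $x_0\in\bR^n$, set $y_0=f(x_0)$, and let $U=U(x_0,f,r)$ be a normal neighborhood of $x_0$ chosen so small that $U\subset B(0,R_0)$ for some $R_0\geq 1$; by construction $f(U)=B(y_0,r)$. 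Since $\overline{B}(y_0,r/2)$ and $F_0$ are disjoint continua of positive $n$-capacity in $\bR^n$,
\[
\mathcal{M}_0:=\modulus\bigl(\Gamma(\overline{B}(y_0,r/2),F_0;\bR^n)\bigr)>0.
\]

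The key structural step is a lifting obstruction. Let $\Gamma':=\Gamma(\overline{B}(y_0,r/2),F_0;\bR^n)$. For each $\beta\in\Gamma'$, the starting point $\beta(0)$ lies in $\overline{B}(y_0,r/2)\subset f(U)$, so it admits a preimage $x_\beta\in U$. The maximal $f$-lift $\alpha_\beta$ of $\beta$ starting at $x_\beta$ exists by Chapter~II Theorem~3.2 of~\cite{r93}, and it cannot reach the endpoint $\beta(1)\in F_0$ because $F_0\cap f(\bR^n)=\emptyset$. By maximality together with the openness and discreteness of $f$, $\alpha_\beta$ cannot accumulate at any interior point of $\bR^n$ and must therefore escape to infinity. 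Applying Lemma~\ref{lemma:Vaisala inequality} with $m=1$ (the multiplicity condition is vacuous for a single lift per path) and $\Gamma:=\{\alpha_\beta\}_{\beta\in\Gamma'}$ then gives
\[
\mathcal{M}_0\ \leq\ \modulus_{K_I(\cdot,f)}(\Gamma).
\]

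Since each $\alpha_\beta$ starts inside $\overline{B}(0,R_0)$ and escapes to infinity, for every $R>2R_0$ it contains a sub-path joining $\overline{B}(0,R_0)$ to $\bR^n\setminus B(0,R)$. Monotonicity of the weighted modulus under sub-path inclusion together with Lemma~\ref{lemma:upper estimate for spherical rings} (whose hypothesis is immediate from the $K$-bounded $p$-mean distortion assumption) then yields
\[
\modulus_{K_I(\cdot,f)}(\Gamma)\leq C\log^{1-n}(R/R_0)\to 0\quad\text{as }R\to\infty,
\]
contradicting $\mathcal{M}_0>0$. The main technical hurdle is establishing the lifting obstruction rigorously, i.e.\ ruling out that maximal $f$-lifts accumulate interior to $\bR^n$; this relies on the sense-preservation, openness, and discreteness of $f$ from~\cite{kkm011} together with the local properness afforded by normal neighborhoods.
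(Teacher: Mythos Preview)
Your argument is correct in spirit but takes a route quite different from the paper's. The paper's proof is a one-line citation: it simply observes that the hypotheses of Corollary~1.3 in~\cite{ko06} (Koskela--Onninen) are satisfied and invokes that result directly. Your approach, by contrast, gives a self-contained proof using the tools already assembled here---the V\"ais\"al\"a inequality (Lemma~\ref{lemma:Vaisala inequality}) together with the upper weighted-modulus estimate on spherical rings (Lemma~\ref{lemma:upper estimate for spherical rings}). This has the virtue of making the argument independent of the external reference, at the cost of some additional length; in effect you are reproducing part of the reasoning behind the cited corollary.

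One small correction: the claim that ``standard capacity theory then supplies a non-degenerate continuum $F_0\subset\bR^n\setminus f(\bR^n)$'' is not correct---sets of positive $n$-capacity can be totally disconnected (suitable Cantor sets, for instance). Fortunately your argument does not actually need $F_0$ to be a continuum. Take instead any compact subset $F_0\subset\bR^n\setminus f(\bR^n)$ of positive $n$-capacity (such a subset exists by inner regularity of capacity), and then $\mathcal{M}_0>0$ follows from the condenser characterization of positive $n$-capacity rather than from the Loewner property for continua. With that adjustment, the lifting obstruction (maximal lifts of paths ending in $F_0$ must escape to infinity, by Rickman~\cite{r93}, Chapter~II) and the subsequent modulus comparison go through exactly as you wrote.
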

\begin{proof}
Note that in our situation, all the requirements of Corollary 1.3 in~\cite{ko06} are satisfied and thus the conclusion follows.
\end{proof}

\subsection{Growth estimate}
In this subsection, we will derive a growth estimate on $f$ under the assumption that $J(x,f)$ is doubling for balls centered at the origin with radius bigger than or equal to 1. The idea of the proof is from Lemma 12.3 in~\cite{or09}. For the statement of the following results, we introduce the notations  $L(0,R)=\sup\{|f(x)-f(y)|:|x-y|\leq R\}$ and $l(0,R)=\inf\{|f(x)-f(y)|:|x-y|\geq R\}$.

\begin{lemma}\label{lemma:distortion estimate}
 Let $f: \bR^n\to\bR^n$ be a non-constant mapping of $K$-bounded $p$-mean distortion for some $p>n-1$.
 If $J(x,f)$ is $C$-doubling for all balls centered at the origin with radius bigger than or equal to 1, then there exist $C'=C'(n,K)>0$ and $R_0>0$ such that
\begin{equation}
L(0,R)\leq C'R^k
\end{equation}
for all $R>R_0$, where $k=\frac{\log_2C+1}{n}$.
\end{lemma}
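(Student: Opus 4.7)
The plan is as follows: first normalize $f(0)=0$; then establish an oscillation estimate
\[L(0,R)^n \leq C_1(n,K)\int_{B(0,2R)}J(x,f)\,dx;\]
and finally combine this with the $C$-doubling hypothesis to get the polynomial bound. The doubling, applied iteratively over the dyadic scales $1,2,4,\ldots,2^k$, yields $\int_{B(0,R)}J(x,f)\,dx \leq A_0 R^{\log_2 C}$ for all $R\geq 1$, with $A_0$ depending only on $C$ and $\int_{B(0,1)}J(x,f)\,dx$. One additional doubling step then dominates $\int_{B(0,2R)} J$ by $CA_0R^{\log_2 C}$, so that the oscillation estimate yields $L(0,R)^n \leq C'R^{\log_2 C + 1}$ and hence $L(0,R)\leq C'' R^{(\log_2 C + 1)/n}$; the extra $+1$ in the exponent records the single additional doubling needed to pass from $B(0,R)$ to $B(0,2R)$.

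The heart of the argument is the oscillation estimate, and the route to it is the quantitative image-covering inclusion
\[B(0, cL(0,R))\subset f(B(0,2R))\]
for some constant $c=c(n,K)\in(0,1)$. Once this is in hand, Lemma~\ref{lemma:Area Formula} applied on $B(0,2R)$ gives
\[c^n\omega_n L(0,R)^n \leq |f(B(0,2R))| \leq \int_{B(0,2R)} J(x,f)\, dx,\]
since $N(y,f,B(0,2R))\geq 1$ for $y\in f(B(0,2R))$. To prove the covering inclusion, fix $y$ with $|y|\leq cL(0,R)$, let $\beta_y(t)=ty/|y|$ for $t\in[0,|y|]$, and use the maximal $f$-lifting construction (\cite{r93}, Chapter~II, Theorem~3.2) to obtain a maximal lift $\alpha\colon[0,c_y)\to\bR^n$ with $\alpha(0)=0$. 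The claim is that $\alpha$ remains in $B(0,2R)$, so that $c_y=|y|$ and $y\in f(B(0,2R))$. If instead $\alpha$ escaped through $S(0,2R)$, one would apply the V\"ais\"al\"a inequality (Lemma~\ref{lemma:Vaisala inequality}) to a family of $f$-lifts of $\beta_y$ crossing the ring $B(0,2R)\setminus\closure{B}(0,R)$, together with the upper bound on the $K_I(\cdot,f)$-modulus of this ring (Lemma~\ref{lemma:upper estimate for spherical rings}). Choosing $c$ sufficiently small in terms of $n$ and $K$, the contradiction between the large modulus of the lifted family and the uniformly bounded $K_I$-weighted modulus forces $\alpha$ to stay in $B(0,2R)$.

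The main obstacle is this last escape argument. The delicate point is to convert the smallness of $|y|/L(0,R)$ -- the fact that the image segment $\beta_y$ traverses only a tiny interval compared to the potential length of its lifts through the ring -- into a quantitatively large modulus for a family of lifts, thereby violating the constant cap coming from Lemma~\ref{lemma:upper estimate for spherical rings}. Making the constants $c$ and $C_1$ genuinely independent of $R$ (depending only on $n$, $K$, $p$) requires careful bookkeeping; it is here that the hypothesis $p>n-1$, which underlies both the area formula and the modulus estimates of Section~2, is used essentially.
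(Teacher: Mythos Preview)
Your overall architecture --- reduce to the oscillation inequality $L(0,R)^n\le C_1\int_{B(0,2R)}J(x,f)\,dx$ and then iterate the doubling hypothesis over dyadic scales --- is exactly the skeleton of the paper's proof. The divergence is in how you reach the oscillation inequality.

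You go through an intermediate image-covering claim $B(0,cL(0,R))\subset f(B(0,2R))$ and try to rule out escape of the single lift $\alpha$ by a V\"ais\"al\"a argument. This is where the proposal has a genuine gap, and you essentially flag it yourself. The V\"ais\"al\"a inequality reads $\modulus(\Gamma')\le \modulus_{K_I(\cdot,f)}(\Gamma)/m$; for a contradiction you need the \emph{target} family $\Gamma'$ to have modulus bounded below. But your target family is the single segment $\beta_y$ (or its subarcs), and any family consisting of a single nonconstant path has $n$-modulus zero. The ring upper bound from Lemma~\ref{lemma:upper estimate for spherical rings} controls $\modulus_{K_I}$ of the \emph{lifted} family --- the same quantity you call ``large'' in the next sentence --- so the two estimates point the same way and no contradiction arises. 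The smallness of $|y|/L(0,R)$ does not, by itself, manufacture a large image family; to push this route through one would need an additional continuum in the ring whose image is far from $0$, and producing one without already knowing a multiplicity bound is not straightforward.

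The paper sidesteps the image-covering claim entirely. It picks $a\in\closure{B}(0,R)$ realizing $L(0,R)$, takes an unbounded path $\gamma$ in $\bR^n\setminus B(f(0),L(0,R))$ from $f(a)$, and lifts it to an unbounded $\gamma'$ from $a$. With $\Gamma$ the family joining $\closure{B}(0,1)$ to $|\gamma'|\cap\closure{B}(0,2R)$ inside $B(0,2R)$, Lemma~\ref{lemma:lower modulus estimate} gives a lower bound on $\modulus_{K^{-1}(\cdot,f)}(\Gamma\setminus\Gamma_f)$. For the upper bound one applies the $K_O$-inequality (Lemma~\ref{lemma:K_0 inequality}) with the test function $\rho=\frac{2}{L(0,R)}\chi_{f(B(0,2R))}$; the area formula then converts $\int\rho^n N(y,f,B(0,2R))\,dy$ directly into $\frac{2^n}{L(0,R)^n}\int_{B(0,2R)}J(x,f)\,dx$. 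The multiplicity function is absorbed into the Jacobian integral, so no bound on $N(f,\bR^n)$ and no image-covering statement is needed. Comparing the two bounds yields the oscillation inequality at once.

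A minor side remark: your accounting for the ``$+1$'' in $k=(\log_2 C+1)/n$ is off. One extra doubling from $B(0,R)$ to $B(0,2R)$ contributes only a multiplicative constant $C$, not an extra power of $R$; your route would actually produce the smaller exponent $(\log_2 C)/n$, which of course still implies the stated bound. In the paper the extra factor of $R$ enters through the lower modulus bound, not through the doubling step.
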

\begin{proof}
By Lemma~\ref{lemma:distortion estimate}, we may set $R_0$ to be such a number that $L(0,R)=1$.  Let $R>R_0$. We fix a point $a\in\closure{B}(0,R)$ such that
\begin{equation}
|f(a)-f(0)|=L(0,R).
\end{equation}
By Lemma~\ref{lemma:upper modulus estimate}, we may choose  $\gamma:[0,\infty)\to \bR^n\backslash B(f(0),L(0,R))$,
starting at $f(a)$ with $|\gamma|$ unbounded and a maximal $f$-lifting of $\gamma'$ of $\gamma$ starting at $a$.
Then $|\gamma'|$ is also unbounded. Lemma~\ref{lemma:lower modulus estimate} implies that
\begin{equation}\label{equation:lower1}
\modulus_{K^{-1}(\cdot,f)}(\Gamma\backslash \Gamma_f)\geq C(n,K)R^{-1},
\end{equation}
where $\Gamma$ is the family of curves joining $\closure{B}(0,1)$ to $|\gamma'|\cap\closure{B}(2R)$ in $B(0,2R)$.
Since every path $\eta\in f(\Gamma)$ intersects $f(\closure{B}(0,1))$ and $\bR^n\backslash B(f(0),L(0,R))$,
the function $\rho:\bR^n\to[0,\infty]$ defined by
\begin{equation*}
\rho(y)=\frac{2}{L(0,R)}\chi_{f(B(0,2R))}(y)
\end{equation*}
is a test function for $f(\Gamma)$. By Lemma~\ref{lemma:Area Formula},
\begin{equation}\label{equation:upper}
\int_{\bR^n}N(y,f,B(0,2R))\rho^n(y)dy=\frac{2^n}{L(0,R)^n}\int_{B(0,2R)}J(x,f)dx\leq C_1\frac{R^s}{L(0,R)^n}
\end{equation}
for $s=\log_2C$. The claim now follows from Lemma~\ref{lemma:K_0 inequality}, (\ref{equation:lower1}) and (\ref{equation:upper}).
\end{proof}
 
\textbf{Corollary} Let $f: \bR^n\to\bR^n$ be a non-constant mapping of $K$-bounded $p$-mean distortion for some $p>n-1$. If $A(r)$ is $C$-doubling for $r\geq 1$ and $L(0,r)\leq C_1 l(0,r)$ for $r>r_0$, then there exist $C'=C'(n,K)>0$ and $R_0'>0$ such that 
\begin{equation}
l(0,2R)\geq C'R^\alpha
\end{equation}
for all $R>R_0$, where $\alpha=\frac{1}{n(\log_2C+1)}$. In particular, $f(\bR^n)$ contains balls of arbitrary large radius.
\begin{proof}
The proof is quite similar to Lemma~\ref{lemma:distortion estimate}. Let $R_0$ be the same number as in the proof of Lemma~\ref{lemma:distortion estimate} and $R_0'=\max\{r_0,R_0\}$. For $R>R_0'$ we write 
\begin{equation*}
\rho(y)=\frac{2}{l(0,2R)}\chi_{f(B(0,2R))}(y)
\end{equation*}
to be the test function for $f(\Gamma)$ and
\begin{multline*}
\int_{\bR^n}N(y,f,B(0,2R))\rho^n(y)dy=\frac{2^n}{l(0,2R)^n}\int_{B(0,2R)}J(x,f)dx\\
\leq \frac{2^n(1+L(0,2R)^2)^n}{l(0,2R)^n}\int_{B(0,2R)}\frac{J(x,f)}{(1+|f(x)|^2)^n}dx\\
\leq C(n)l(0,2R)^nR^s\\
\end{multline*}
for $s=\log_2C$. 
\end{proof}
\remark This result can be viewed as a covering theorem for mappings of finite distortion, comparing it with Corollary 1.2 in~\cite{r07}.

\section{Proof of theorem A}
\begin{proof}
The implication ``$\textbf{3}\Rightarrow \textbf{1}"$ follows from modulus estimates. First of all, Lemma~\ref{lemma:Area Formula} implies that
\begin{equation*}
\int_{2B}J(x,f)dx=\int_{f(2B)}N(y,f,2B)dy\leq N(f,\bR^n)|f(2B)|
\end{equation*}
and
\begin{equation*}
\int_{B}J(x,f)dx=\int_{f(B)}N(y,f,B)dy\geq |f(B)|.
\end{equation*}
Therefore it suffices to show that
\begin{equation*}
|f(2B)|\leq C|f(B)|
\end{equation*}
whenever $B\subset \bR^n$ is a ball centered at origin with radius bigger than or equal to $1$.

To this end, fix a ball $B=B(0,\eta)\subset \bR^n$ with $\eta\geq 1$. Since $f$ is open, $f(0)$ is an interior point of the open connected
set $U=f(B)$. Let $d=\dist(f(0), \bdary{U})$ and set
\begin{equation*}
r=\max_{y\in\bdary{U'}}|f(0)-y|,
\end{equation*}
where $U'=f(2B)$. Then clearly $2r\geq \diam(U')$. Let $L_0$ be the line segment of length $d$ joining $f(0)$ to $\bdary{U}$ and let $L_1$
be a half line in $\bR^n$ joining a point in $\bdary{B(f(0),r)}\cap \bdary{U'}$ to $\infty$ in $\bR^n\cup\{\infty\}\backslash B(f(0),r)$.
 Then some lifts of $L_0$ and $L_1$ join $0$ to $\bdary B$, and $\bdary 2B$ to $\infty$, respectively, and hence by
 Lemma~\ref{lemma:lower modulus estimate}
\begin{equation*}
\modulus_{K^{-1}(\cdot,f)}(\Gamma\backslash \Gamma_f)\geq C>0,
\end{equation*}
where $\Gamma$ is the family of paths joining $f^{-1}(L_0)$ and $f^{-1}(L_1)$ in $3B$. On the other hand, Lemma~\ref{lemma:K_0 inequality} gives us
\begin{equation*}
\modulus_{K^{-1}(\cdot,f)}(\Gamma\backslash \Gamma_f)\leq N(y,f,\bR^n)\modulus(L_0,L_1,\bR^n)\leq C(\log\frac{r}{d})^{1-n}.
\end{equation*}
We thus obtain $r\leq Cd$, which implies that $f(B)$ and $f(2B)$ have comparable volumes, as desired.

Next, we prove the equivalence of $\textbf{2}$ and $\textbf{3}$. First we show that $\textbf{2}$ implies $\textbf{3}$. To this end,
fix a point $y\in f(\bR^n)$. Since $f$ is discrete and $\lim_{x\in\infty}|f(x)|=\infty$, there exists a ball $B=B(0,r)\subset \bR^n$ such that
\begin{equation}\label{equation:multiplicity inequality}
N(y,f,\bR^n)=N(y,f,B)\leq \sum_{x\in f^{-1}(y)}i(x,f)=:M<\infty.
\end{equation}
Now suppose that there exists a point $v\in X$ with
\begin{equation*}
\sum_{x\in f^{-1}(v)}i(x,f)>M,
\end{equation*}
and choose a compact path $\gamma$ beginning at $v$ and ending at $y$. Then there are at least $M+1$ lifts $\gamma^j$ of $\gamma$
starting at $f^{-1}(v)$, and each of them either ends at some $x\in f^{-1}(y)$ or leaves every compact subset of $\bR^n$.
The latter cannot happen for any $j$ since $f$ is of polynomial type. But,  by~\eqref{equation:multiplicity inequality}, the former
can occur for at most $M$ of the lifts $\gamma^j$. This is a contradiction and thus $\textbf{2}$ implies $\textbf{3}$.

Now assume $\textbf{3}$ and suppose that $\textbf{2}$ does not hold. Then there exists a sequence $(a_i)$ of points in $\bR^n$,
such that $|a_i|$ increases to infinity but
\begin{equation}\label{equation:polynomial}
\limsup_{i\to\infty}|f(a_i)-f(0)|=R<\infty.
\end{equation}
Passing to a subsequence if necessary, we may assume that
\begin{equation*}
\diam f(B(0,|a_i|))\geq R/2
\end{equation*}
for every $i\in\bN$. Note that $f$ is continuous and open, and hence it is monotone. It follows that
\begin{equation}\label{equation:lower}
\diam f(S(0,|a_i|))\geq R/2.
\end{equation}

Since $N(f,\bR^n)<\infty$, we may fix a $\delta>0$ so that $U(x_j,\delta)$ is a normal neighborhood of $x_j$ for every $x_j\in f^{-1}(f(0))$. Then
\begin{equation}\label{equation:covering}
\cup_{x_j\in f^{-1}(f(0))}U(x_j, \delta)\subset B(0,t)
\end{equation}
for some $t>0$. When $|a_i|>t$, let $\Gamma_i$ be the family of all paths joining $B(f(0),\delta)$ and $f(S(0,|a_i|))$ in $\bR^n$. Then
by~\eqref{equation:polynomial}, \eqref{equation:lower} and the $n$-Loewner property of $\bR^n$, there exists a constant $C>0$ such that
$\modulus(\Gamma_i)\geq C$ for every $i$. Denote by $\Gamma_i'$ the family of all lifts $\gamma'$ of $\gamma\in \Gamma_i$ starting at $S(0,|a_i|)$.

By~\eqref{equation:covering}, every $\gamma'\in \Gamma'_i$ either intersects $B(0,t)$ or leaves every compact set in $\bR^n$. The latter
family of paths have $K_I(\cdot,f)$-modulus zero. All other paths start at $S(0,|a_i|)$ and intersect $S(0,t)$, so
\begin{equation*}
\modulus_{K_I(\cdot,f)}(\Gamma'_i) \leq C(n,K,t)\log^{1-n}(|a_i|/t)\to 0, \ \ as\ i\to\infty
\end{equation*}
by the remark after Lemma~\ref{lemma:upper estimate for spherical rings}. We thus get a contradiction to Lemma~\ref{lemma:Vaisala inequality}.
Therefore, $\textbf{2}$ follows from $\textbf{3}$.

Now we show the implication ``$\textbf{1}\Rightarrow \textbf{3}"$. First we suppose $R$ is large and  fix a point $y\in f(B(0,R))$ with
$m$ preimage points $x_1,\cdots,x_m$ inside $B(0,R)\subset\bR^n$, where $m$ is a large positive integer to be estimated later. We choose
$\delta>0$ so that $U(x_i,\delta)\subset B(0,R)$ is a normal neighborhood for each $i=1,\cdots,m$, and the sets $U(x_i,\delta)$ are pairwise disjoint.

By Lemma~\ref{lemma:upper modulus estimate}, we can choose a point $f(q)\in f(\bR^n)$ with $|y-f(q)|$ as large as desired,  and a path
\begin{equation*}
\gamma:[0,\infty)\to \bR^n\backslash B(y,|y-f(q)|),
\end{equation*}
starting at $f(q)$, such that $|\gamma|$ is unbounded. Then
\begin{equation}\label{equation:lower2}
\modulus(\Gamma)=\modulus(\closure{B}(y,\delta),|\gamma|,\bR^n)\geq C_1(\log\frac{|y-f(q)|}{\delta})^{1-n},
\end{equation}
where $C_1>0$ does not depend on $q$.

By Lemma~\ref{lemma:distortion estimate} and our choice of $\gamma$, there exist $\alpha>0$ such that
\begin{equation}\label{eq:upper ineq}
\dist(0,f^{-1}(|\gamma|))\geq C|y-f(q)|^\alpha
\end{equation}
when the right-hand term is larger than some fixed constant. For each $\eta\in\Gamma$, there are at least $m$ maximal $f$-liftings $\eta_i$
starting at the points $x_i\in B(0,R)$. Moreover, by the above estimate, each of them intersects $\bR^n\backslash B(0,C|y-f(q)|^\alpha)$.
Denote the family of all such lifts by $\Gamma'$. Then by Lemma~\ref{lemma:Vaisala inequality} and Lemma~\ref{lemma:upper estimate for spherical rings},
\begin{equation}\label{equation:upper2}
\modulus(\Gamma)\leq\frac{\modulus_{K_I(\cdot,f)}(\Gamma')}{m}\leq \frac{C_1}{m}\log^{1-n}(C_2|y-f(q)|^\alpha/R).
\end{equation}
Combining $(\ref{equation:lower2})$ and $(\ref{equation:upper2})$ yields
\begin{equation}\label{eq:compare order}
|y-f(q)|^\alpha\leq \frac{R}{\delta}|y-f(q)|^{\frac{C}{m^{1/(n-1)}}}.
\end{equation}
We conclude from~\eqref{eq:compare order} that $m\leq(\frac{C}{\alpha})^{n-1}$. To see this, suppose that the conclusion is not true,
 \ie $m>(\frac{C}{\alpha})^{n-1}$. We may let $|y-f(q)|$  be as 
large as we wish and obtain a contradiction with~\eqref{eq:compare order}.

The implication ``$\textbf{1}\Rightarrow \textbf{4}"$ follows immediately from Lemma~\ref{lemma:distortion estimate}. The proof of implication
``$\textbf{4}\Rightarrow \textbf{3}"$ is the same as that in the implication ``$\textbf{1}\Rightarrow \textbf{3}"$. Indeed, \eqref{eq:upper ineq} 
follows immediately from Lemma~\ref{lemma:distortion estimate}.

Finally we only need to show that one of the above four conditions implies that $A(r)$ is doubling for $r\geq r_0$ for some positive constant $r_0$. 
We want to prove that $\textbf{3}$ implies that $A(r)$ is doubling for all $r\geq r_0$. The proof is similar to the proof for the implication 
 ``$\textbf{3}\Rightarrow \textbf{1}"$.
Let $B=B(0,r)$ with $r\geq r_0$, where $r_0>0$ is to be determined later. Lemma~\ref{lemma:Area Formula} gives us
\begin{multline*}
A(2r)=\int_{2B}\frac{J(x,f)}{(1+|f(x)|^2)^n}dx\\
=\int_{f(2B)}\frac{N(y,f,2B)}{(1+|y|^2)^n}dy
\leq N(f,\bR^n)\int_{f(2B)}\frac{1}{(1+|y|^2)^n}dy
\end{multline*}
and
\begin{equation*}
A(r)=\int_{B}\frac{J(x,f)}{(1+|f(x)|^2)^n}dx=\int_{f(B)}\frac{N(y,f,2B)}{(1+|y|^2)^n}dy\geq \int_{f(B)}\frac{1}{(1+|y|^2)^n}dy.
\end{equation*}
Therefore it suffices to show that
\begin{equation*}
\int_{f(2B)}\frac{1}{(1+|y|^2)^n}dy\leq C\int_{f(B)}\frac{1}{(1+|y|^2)^n}dy.
\end{equation*}

Then we follow the proof of implication of ``$\textbf{3}\Rightarrow \textbf{1}"$.  Since $f$ is open, $f(0)$ is an interior point of the open connected
set $U=f(B)$. Let $d=\dist(f(0), \bdary{U})$ and set
\begin{equation*}
r'=\max_{y\in\bdary{U'}}|f(0)-y|,
\end{equation*}
where $U'=f(2B)$. Now we choose $r_0$ so that $r\geq r_0$ implies that $d\geq 2$. Arguing as before, we conclude that $r'\leq Cd$. 
Thus we have
\begin{equation*}
\int_{f(B)}\frac{1}{(1+|y|^2)^n}dy\geq \int_{B(f(0),d)}\frac{1}{(1+|y|^2)^n}dy\geq C(n)\int_0^d\frac{t^{n-1}}{(1+t^2)^n}dt.
\end{equation*}
On the other hand,
\begin{equation*}
\int_{f(2B)}\frac{1}{(1+|y|^2)^n}dy\leq \int_{B(f(0),r')}\frac{1}{(1+|y|^2)^n}dy\leq C(n)\int_0^{r'}\frac{t^{n-1}}{(1+t^2)^n}dt.
\end{equation*}
Note that if we split the integral $\int_0^{l}\frac{t^{n-1}}{(1+t^2)^n}dt$ into two parts,
\begin{equation*}
\int_0^{l}\frac{t^{n-1}}{(1+t^2)^n}dt=\int_0^1\frac{t^{n-1}}{(1+t^2)^n}dt+\int_1^l\frac{t^{n-1}}{(1+t^2)^n}dt,
\end{equation*}
then, for the second integral, we have that
\begin{equation*}
\frac{1}{n\cdot2^n}(1-\frac{1}{l^n})\leq \int_1^l\frac{t^{n-1}}{(1+t^2)^n}dt\leq \frac{1}{n}(1-\frac{1}{l^n}).
\end{equation*}
Since $1-\frac{1}{d^n}\geq 1-\frac{1}{2^n}$ and $1-\frac{1}{r^n}\leq 1$, we conclude that $A(r)$ is doubling for $r\geq r_0$.
\end{proof}

\remark Note that the boundedness of $N(f,\bR^n)$ does not imply that $J(x,f)$ being doubling for all balls centered at the origin as one may consider the
mapping 
\begin{equation*}
  F(x)=
  \begin{cases}
   \frac{x}{|x|}\log^{-1}\frac{1}{|x|} & \text{if } |x| \leq 1/e \\
    x & \text{if } |x| \geq 1/e .
  \end{cases}
\end{equation*}

\textbf{Acknowledgements}

We thank Academic Professor Pekka Koskela for many useful suggestions and for carefully reading the
manuscript. We also thank Academy Research Fellow Kai Rajala for many valuable discussions.


\begin{thebibliography}{99}
\bibitem{hk95}
J.Heinonen and P.Koskela, Weighted Sobolev and Poincare inequalities and quasiregular mappings of polynomial type, Math. Scand. 77 (1995), no. 2, 251-271.

\bibitem{kkm011}
J.Kauhanen, P.Koskela and J.Maly,  Mappings of finite distortion: discreteness and openness, Arch. Ration. Mech. Anal. 160 (2001), no. 2, 135-151.

\bibitem{kkm01}
J.Kauhanen, P.Koskela and J.Maly, Mappings of finite distortion: condition N, Michigan Math. J. 49 (2001), no. 1, 169-181.

\bibitem{ko06}
P.Koskela and J.Onninen, Mappings of finite distortion: capacity and modulus inequalities, J. Reine Angew. Math. 599 (2006), 1-26.

\bibitem{or09}
J.Onninen and K.Rajala, Quasiregular mappings to generalized manifolds, J. Anal. Math. 109 (2009), 33-79.

\bibitem{r04}
K.Rajala, Mappings of finite distortion: the Rickman-Picard theorem for mappings of finite lower order, J. Anal. Math. 94 (2004), 235-248.

\bibitem{r07}
K.Rajala, Bloch's theorem for mappings of bounded and finite distortion, Math. Ann. 339 (2007), no. 2, 445-460.

\bibitem{r93}
S.Rickman, Quasiregular mappings. Ergebnisse der Mathematik und ihrer Grenzgebiete (3) [Results in Mathematics and Related Areas (3)], 26. Springer-Verlag, Berlin, 1993, x+213 pp.



\end{thebibliography}
\end{document}